\newtheorem{thm}{Theorem}[section]
\newtheorem{cor}[thm]{Corollary}
\newtheorem{prop}[thm]{Proposition}
\newtheorem{lemma}[thm]{Lemma}
\theoremstyle{definition}
\newtheorem{definition}[thm]{Definition}
\newtheorem{remark}[thm]{Remark}
\newtheorem{example}[thm]{Example}
\def\X0{X^{\circ}}
\def\Y0{Y^{\circ}}
\numberwithin{equation}{section}       % Number formulas within sections
\begin{document}

\title[Darboux-Jouanolou Integrability over Arbitrary Fields]
{Darboux-Jouanolou Integrability over Arbitrary Fields}

%\author[J.V. Pereira]{Jorge Vit\'{o}rio PEREIRA}
%\address{IMPA, Estrada Dona Castorina, 110, Horto, Rio de Janeiro,
%Brasil}
%\email{jvp@impa.br}

\author[E. A. Santos]{Edileno de Almeida SANTOS}
\address{Faculdade de Ciências Exatas e Tecnologia (FACET) - Universidade Federal da Grande Dourados (UFGD), Rodovia Dourados - Itahum, Km 12 - Cidade Universitátia, Dourados - MS,
Brazil}
\email{edilenosantos@ufgd.edu.br}

\author[S. Rodrigues]{Sergio RODRIGUES}
\address{Faculdade de Ciências Exatas e Tecnologia (FACET) - Universidade Federal da Grande Dourados (UFGD), Rodovia Dourados - Itahum, Km 12 - Cidade Universitátia, Dourados - MS,
Brazil}
\email{sergiorodrigues@ufgd.edu.br}

\subjclass{15A75} \keywords{Vector Fields, Differential Forms, Algebraic Integrability}

%\thanks{The author would like to thanks J. V. Pereira for incentive and valuable conversations.}

\begin{abstract}
We prove a Darboux-Jouanolou type theorem on the algebraic integrability of polynomial differential $1$-forms over arbitrary fields.
\end{abstract}

\maketitle

\setcounter{tocdepth}{1}
\sloppy
%%\tableofcontents

%%The authors would like to thanks... %J. V. Pereira for incentive, valuable ideas and conversations.

%\section{Darboux-Joaunolou Integrability for Arbitrary Fields}

\section{Introduction}
Ordinary differential equations appear in many branches of Mathematics and its applications. For example, occasionally the differential equations are given by a differential $1$-form or a vector field on $\mathbb R^n$ or $\mathbb C^n$, and in that case to better understand their solutions (in real or complex time, respectively) we can search for a {\it first integral} (a constant function along the solutions).

In that sense, the Darboux theory of integrability is a classical approach. The foundations of this was given by J.-G. Darboux (1878) in his memory \cite{Darboux}, showing how can be constructed a first integral for a polynomial vector field, on $\mathbb R^2$ or $\mathbb C^2$, from a large enough number of invariant algebraic curves. In fact, he proved that, if $d$ is the degree of the planar vector field, with $\binom{d+1}{2}+1$ invariant algebraic curves we can compute a first integral using that curves. H. Poincaré (1891) noticed the difficulty to obtain algorithmically that invariant curves (see \cite{Poincaré}).

Improving the Darboux's result, J.-P. Jouanolou (1979) showed in \cite{Jouanolou} that if the number of invariant curves for a planar vector field of degree $d$ on  $\mathbb R^2$ or $\mathbb C^2$ is at least $\binom{d+1}{2}+2$, then it admits a rational first integral, and it can be computed from the invariant curves. In fact, Jouanolou's Theorem says that if $K$ is a field of characteristic $0$ and $\omega$ is a polynomial $1$-form of degree $d$ on $K^n$ with at least $\binom{d-1+n}{n}\cdot \binom{n}{2}+2$ invariant irreducible algebraic hypersurfaces, then $\omega$ has a rational first integral, and that one can also be computed in terms of the invariant hypersurfaces.

The {\it Darboux Integration Method} can be used in Physics. For example, C. G. Hewitt (1991) in \cite{Hewitt} reveals by this method some new solutions to the Einstein field equations. Many others physical models have been successfully studied in this framework (see \cite{Llibre1}, \cite{Llibre2}, \cite{Valls}, \cite{Zhang}).

E. Ghys (2000) in \cite{Ghys} extended Jouanolou's result to holomorphic foliations on compact complex manifolds. In the local analytic context, under some assumptions, B. Scárdua (2011) shows in \cite{Scárdua} a Darboux-Jouanolou type theorem for germs of integrable $1$-forms on $(\mathbb C^n,0)$. Over fields of characteristic zero, a Darbou-Jouanolou type theorem is proved in \cite{Corrêa} for polynomial $r$-forms, $r\geq 0$.

M. Brunella and M. Nicollau (1999) proved in \cite{B} a positive characteristic version of Darboux-Jouanolou theorem: if $\omega$ is a rational $1$-form on a smooth projective variety over a field $K$ of positive characteristic $p>0$ with infinitely many invariant hypersurfaces, then $\omega$ admits a rational first integral. The proof uses different tools of algebraic geometry and it is not constructive. 

J. V. Pereira (2001) shows in \cite{JVP} that a generic vector field on an affine space of positive characteristic admits an invariant algebraic hypersurface. This theorem is in sharp contrast with characteristic $0$ case where a theorem of Jouanolou says that a generic vector field on the complex plane does not admit any invariant algebraic curve.

Our goal in this paper is to give a general account of Darboux-Jouanolou integrability for polynomial $1$-forms on $K^n$, where $K$ is an arbitrary field. In particular, we obtain a general version of {\it Darboux-Jouanolou Criterion} (\cite{Jouanolou},  Théorème 3.3, p. 102). For each polynomial $r$-form $\omega$ of degree $d$ on $K^n$, with $K$ of characteristic $p\geq 0$, we define a natural number $N_K(n,d,r)$ (see Definition \ref{D:Nk}) that depends only on $n$, $r$, $d$ and $p$, and we prove the following

\begin{thm}\label{Main}
Let $\omega\in \Omega^1(K^n)$ be a polynomial $1$-form of degree $d$ over an arbitrary field $K$. If $\omega$ has $N_K(n,d-1,2)+2$ invariant hypersurfaces, then $\omega$ admits a rational first integral.
\end{thm}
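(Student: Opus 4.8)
The plan is to run the Darboux--Jouanolou machinery --- produce a closed logarithmic $1$-form from the invariant hypersurfaces and extract a rational first integral from it --- but entirely through polynomial \emph{cofactors}, so that every step makes sense over an arbitrary field, and then to isolate the positive-characteristic difficulty at the very end.

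\emph{Step 1 (cofactors).} Write the $N:=N_K(n,d-1,2)+2$ invariant hypersurfaces as $V(f_1),\dots,V(f_N)$ with the $f_i$ irreducible and pairwise non-associate. Invariance of $V(f_i)$ gives a polynomial $2$-form $\Theta_i$ with $\omega\wedge df_i=f_i\,\Theta_i$, and comparing degrees shows $\Theta_i$ has degree $\le d-1$. I would record: (i) the logarithmic identity $\omega\wedge\frac{df_i}{f_i}=\Theta_i$ together with the additivity $\Theta_{fg}=\Theta_f+\Theta_g$; and (ii) that by Definition \ref{D:Nk} the $\Theta_i$ all lie in a fixed $K$-vector space of dimension $N_K(n,d-1,2)$ --- this is exactly where the characteristic $p$ enters, since in that case the space of admissible cofactors is smaller than the full space of degree $\le d-1$ polynomial $2$-forms.

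\emph{Step 2 (two Darboux relations).} Since $N_K(n,d-1,2)+2$ vectors $\Theta_1,\dots,\Theta_N$ lie in a space of dimension $N_K(n,d-1,2)$, the space $R\subseteq K^N$ of relations $\sum_i r_i\Theta_i=0$ satisfies $\dim_K R\ge2$; this is the role of the ``$+2$''. For $r=(r_i)\in R$ put $\eta_r:=\sum_i r_i\frac{df_i}{f_i}$: it is a closed rational $1$-form with $\omega\wedge\eta_r=\sum_i r_i\Theta_i=0$. Because the $f_i$ are pairwise non-associate irreducibles the forms $\frac{df_i}{f_i}$ are $K$-linearly independent (a residue/unique-factorisation argument), so $r\mapsto\eta_r$ is injective; and since $\omega\ne0$, the relation $\omega\wedge\eta_r=0$ forces $\eta_r=\psi_r\,\omega$ for a unique rational function $\psi_r$. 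Hence $r\mapsto\psi_r$ is an injective $K$-linear map $R\hookrightarrow K(x_1,\dots,x_n)$, and I may pick $r_1,r_2\in R$ with $\psi_1:=\psi_{r_1}$ and $\psi_2:=\psi_{r_2}$ linearly independent over $K$.

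\emph{Step 3 (the first integral; the crux).} From $d(\psi_1\omega)=0=d(\psi_2\omega)$, multiplying by $\psi_2$ resp.\ $\psi_1$ and subtracting eliminates $d\omega$ and gives $\psi_2^2\,d(\psi_1/\psi_2)\wedge\omega=0$, hence $\omega\wedge d\psi=0$ with $\psi:=\psi_1/\psi_2$ non-constant (by the linear independence of $\psi_1,\psi_2$). If $d\psi\ne0$ then $\psi$ is a rational first integral of $\omega$ and we are done; in characteristic $0$ this is automatic and recovers the Darboux--Jouanolou criterion. The genuine obstacle is the characteristic $p>0$ case, where one could have $d\psi=0$, i.e.\ $\psi\in K(x_1^p,\dots,x_n^p)$, and then $\psi$ says nothing about $\omega$. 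I would resolve this using the room provided by $\dim_K R\ge2$ (the whole family of ratios $\psi_r/\psi_{r'}$) together with the precise shape of $N_K(n,d-1,2)$ in positive characteristic, to force some admissible ratio to have non-vanishing differential --- equivalently, to guarantee that after descending finitely often along the tower $K(x_i)\supseteq K(x_i^p)\supseteq\cdots$ one reaches a genuine, $d\ne0$ first integral. This last point is where I expect the real work of the proof to lie.
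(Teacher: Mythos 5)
There is a genuine gap, and it sits in your Steps 1--2: your linear algebra is carried out over the wrong field of scalars. By Definition \ref{D:Nk}, $N_K(n,d-1,2)$ is a dimension over the field of differential constants $K(z^p)=\ker d$, \emph{not} over $K$. In characteristic $p>0$ the cofactors $\Theta_i$ are ordinary polynomial $2$-forms of degree $\le d-1$; as such they live in a $K$-vector space of dimension $\binom{d-1+n}{n}\binom{n}{2}$, which is strictly larger than $N_K(n,d-1,2)$. So your claim that the $\Theta_i$ ``lie in a fixed $K$-vector space of dimension $N_K(n,d-1,2)$'' is false, and with only $N_K(n,d-1,2)+2$ cofactors the relation space $R\subseteq K^N$ may well be zero: your argument only yields the classical bound $\binom{d-1+n}{n}\binom{n}{2}+2$, not the theorem as stated. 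The missing idea is to take the dependence relations with coefficients $\lambda_i$ in $K(z^p)$ (this is what Corollary \ref{C:Cofatores} does): since $d\lambda_i=0$, the forms $\eta=\sum_i\lambda_i\frac{df_i}{f_i}$ are still closed, and over $K(z^p)$ the span of the degree-$(\le d-1)$ $2$-forms really does have dimension $N_K(n,d-1,2)$, because the monomials $z_i^p$ become scalars. The price is that the injectivity of $(\lambda_i)\mapsto\sum_i\lambda_i\frac{df_i}{f_i}$ must then be proved for $K(z^p)$-coefficients, which is no longer the routine residue/UFD fact you invoke; it is the paper's Lemma \ref{L:Logarítmicas}, resting on the residue computation of Lemma \ref{L:Resíduo}.

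Your Step 3, by contrast, is essentially right, and the ``real work'' you defer at the end is not where the difficulty lies: once the two relations $r_1,r_2$ are taken independent over $K(z^p)$, the possibility $d\psi=0$ for $\psi=\psi_1/\psi_2$ is excluded by the same injectivity lemma. Indeed $\psi\in K(z^p)$ would give $\sum_i\bigl(r_{1,i}-\psi r_{2,i}\bigr)\frac{df_i}{f_i}=0$ with all coefficients in $K(z^p)$, whence $r_1=\psi r_2$, contradicting independence. The paper arranges this concretely by building $\eta_1$ from $F_1,\dots,F_{m-1}$ and $\eta_2$ from $F_2,\dots,F_m$ with the coefficient of $\frac{dF_m}{F_m}$ nonzero, so that the two logarithmic forms have distinct polar divisors and their ratio $f$ cannot be a differential constant; the elimination identity you write ($d\eta_1=f\,d\eta_2+df\wedge\eta_2$, hence $\omega\wedge df=0$) is then exactly the paper's conclusion. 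In short: repair the scalar field in the dimension count and in the independence statement, and your outline becomes the paper's proof.
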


The number $N_K(n,d-1,2)+2$ has the property that in characteristic $0$ holds $N_K(n,d-1,2)+2= \binom{d-1+n}{n}\cdot \binom{n}{2}+2$ but in characteristic $p>0$ we have
$$
N_K(n,d-1,2)+2< \binom{d-1+n}{n}\cdot \binom{n}{2}+2
$$
%\begin{thm}\label{Main}
%Let $\mathcal P$ be the set prime numbers. There is a function
%$$
%\Delta:(\{0\}\cup \mathcal P)\times \mathbb N\times \mathbb N \rightarrow \mathbb N
%$$
%$$
%(p,n,d)\mapsto \Delta (p,n,d)
%$$
%such that
%$$
%0<p_1<p_2\leq d\Rightarrow \Delta (p_2,n,d)<\Delta (p_1,n,d)<\Delta (0,n,d)
%$$
%and if $K$ is a field of characteristic $p$ and $\omega\in \Omega^1(K^n)$ is a polynomial $1$-form of degree $d$ admitting $\Delta(p,n,d)$ invariant irreducible polynomials, then $\omega$ admits a rational first integral.
%\end{thm}

\section{The Grassmann Algebra}

For the generalities about Grassmann Algebra see  Chapter 2 in \cite{Warner}.

Let $V$ be a vector space over an arbitrary field $K$.

\begin{definition}
A $r$-linear function ({\it $r$-covector}) $f:V^r\rightarrow K$ is said to be {\it symmetric} if
$$
f(v_{\sigma(1)},..., v_{\sigma(r)})=f(v_1,...,v_r)
$$
and it is said to be {\it alternanting} if
$$
f(v_{\sigma(1)},..., v_{\sigma(r)})=(sgn \sigma)f(v_1,...,v_r)
$$
for all permutations $\sigma \in S_r$.
\end{definition}

The $K$-vector space $A_r(V)$ is the space of all alternating $r$-linear functions on $V$.

\begin{definition}
Given a $r$-linear function $f:V^r\rightarrow K$, we define
$$
(Sf)(v_1,...,v_r)=\sum_{\sigma \in S_r}f(v_{\sigma(1)},..., v_{\sigma(r)})
$$
as the {\it symmetrizing operator}; and 
$$
(Af)(v_1,...,v_r)=\sum_{\sigma \in S_r}(sgn \sigma)f(v_{\sigma(1)},..., v_{\sigma(r)})
$$
as the {\it alternating operator}.
\end{definition}

\begin{definition}
Let $f$ be a $r$-linear function and $g$ a $s$-linear function on the $K$-vector space $V$. Their {\it tensor product} is the ($r+s$)-linear function $f\otimes g$ defined by
$$
(f\otimes g)(v_1,...,v_{r+s})=f(v_{1},..., v_{r})g(v_{r+1},..., v_{r+s})
$$
\end{definition}

\begin{definition}
Let $f\in A_r(V)$ and $g\in A_s(V)$. We define their {\it wedge product}, also called {\it exterior product}, by
$$
f\wedge g =A(f\otimes g)
$$%\frac{1}{r!s!} não pode entrar na definição
\end{definition}

\begin{remark}
This wedge product just defined can be different by a constant factor from the usual one in differential geometry, and we use that in order to make sense in a field of positive characteristic.
\end{remark}

For a finite-dimensional vector space $V$, say of dimension $n$, define
$$
A_{*}(V)=\bigoplus_{r=0}^\infty A_r(V)=\bigoplus_{r=0}^n A_r(V)
$$
With the wedge product of {\it multicovectors} as multiplication, $A_*(V)$ becomes an anticommutative graded algebra, called the {\it exterior algebra} or the {\it Grassmann algebra} of multicovectors on the vector space $V$.

\section{Rational and Polynomial Vector Fields}
Let $K$ be an arbitrary field. Denote by $K^n$ the $n$-dimensional $K$-vector space (with coordinates $z_1$, $z_2$,..., $z_n$). Also denote by $K[z]$ the polynomial ring $K[z_1,...,z_n]$ and by $K(z)$ the field of fractions $K(z_1,...,z_n)$. %and by $\Lambda (\mathbb A^n)$ the graded $R$-module of differential forms.

%\begin{definition}
%A {\it polynomial vector field} $X$ on $\mathbb A^n$ is a $K$-derivation o $R$, that is, a $K$-linear operator on $R$ that satisfies Leibniz's rule, i. e.,
%$$
%X(ab)=aX(b)+bX(a)
%$$
%The $K$-vector space of vector fields will be denoted by $\mathfrak{X}(\mathbb A^n)$
%\end{definition}

\begin{definition}
A {\it rational vector field} $X$ on $K^n$ is a $K$-derivation of $K(z)$, that is, a $K$-linear operator on $K(z)$ that satisfies Leibniz's rule, i. e.,
$$
X(fg)=fX(g)+gX(f)
$$
\end{definition}

A vector field $X$ can be written as
$$
X=\sum_{i=1}^n X(z_i)\frac{\partial}{\partial z_i}
$$
where $X(z_i)=X_i\in K(z)$ and $\frac{\partial}{\partial z_i}$ is de natural derivation such that $\frac{\partial}{\partial z_i}(z_j)=\delta_{ij}$ (the {\it Kronecker's delta} is $\delta_{ij}=1$ if $i=j$ and $\delta_{ij}=0$ if $i\neq j$).

If $X(z_i)=X_i\in K[z]$ is polynomial, $i=1$,..., $n$, then we say that $X$ is a {\it polynomial vector field} on $K^n$, and in this case it is a $K$-derivation of $K[z]$. The $K$-vector space of polynomial vector fields will be denoted by $\mathfrak{X}(K^n)$.

In case of $K=\mathbb R$ or $\mathbb C$, the system
$$
\frac{dz_i}{dt}=X_i(z)
$$
$i=1,...,n$, defines a system of differential equations on $K^n$.

%\begin{definition}
%A rational function $f$ is called a {\it first integral} or a {\it non-trivial constant of derivation} of a rational vector field $X$ if $df\neq 0$ and $X(f)=0$. We say that an {\it irreducible}, {\it reduced} polynomial $F$ is {\it invariant} by a polynomial vector field $X$ if $F$ divides $X(F)$.
%\end{definition}

\begin{definition}
Like in theory of differential equations if $f\in K(z)$ is a rational function such that $df\neq 0$ and $X(f)=0$ we say that $f$ is a {\it first integral} or a {\it non-trivial constant of derivation} of $X$. In case of $K=\mathbb R$ or $\mathbb C$ the leaves $\{ f(z)=constant \}$ are invariant sets for the system $\frac{dz_i}{dt}=X_i(z)$, $i=1,...,n$. We say that an irreducible polynomial  $F \in K[z]$ is {\it invariant} by a polynomial vector field $X$ if $F$ divides $X(F)$ (we also say that $\{F=0\}\subset (K^a)^n$ is an {\it invariant hypersurface} for $X$ over the algebraic closure $K^a$ of $K$). In case of $K=\mathbb R$ or $\mathbb C$, $\{ F=0 \}$ is an invariant set for the system $\frac{dz_i}{dt}=X_i(z)$, $i=1,...,n$.
\end{definition}

%In case of $K$ is of characterisitc  p we require also that   f \not= g^p, that is,  we say that f is reduced. 

%We say that a reduced polynomial  F \in R  is invariant by a vector field if F divides X(F). In case of K= R or C, {F=0} is a solution of the system S(X).

\section{Rational and Polynomial Differential Forms}

%{\bf From now on every vector field or differential form will be polynomial, unless stated otherwise.}

A {\it rational differential $1$-form} $\omega$ on $K^n$ is a $K(z)$-linear map from the set of rational vector fields to $K(z)$:
$$
\omega=\sum_{i=1}^n \omega(\frac{\partial}{\partial z_i})d z_i
$$
where $\omega(\frac{\partial}{\partial z_i})\in K(z)$ and  $dz_i$ is de natural $1$-form dual to $\frac{\partial}{\partial z_i}$, that is, $dz_i(\frac{\partial}{\partial z_j})=\delta_{ij}$.

Hence $dz_1$,..., $dz_n$ is the canonical dual basis to the basis of rational vector fields $\frac{\partial}{\partial z_1}$,..., $\frac{\partial}{\partial z_n}$.

If $\omega(\frac{\partial}{\partial z_i})$ is polynomial, $i=1$,..., $n$, we say that $\omega$ is a {\it polynomial differential $1$-form} on $K^n$, and in this case it is a $K[z]$-linear map $\mathfrak{X}(K^n)\rightarrow K[z]$, $X\mapsto \omega(X)$.  The $K$-vector space of polynomial differential $1$-forms will be denoted by $\Omega^1(K^n)$

If $f$ is a rational function in $K(z)$, we write the {\it exact} $1$-form
$$
df=\sum_{i=1}^n \frac{\partial f}{\partial z_i}d z_i
$$ 

%\begin{definition}
%We say that a {\it reduced} polynomial $F$ is {\it invariant} by $\omega\in \Omega^1(\mathbb A^n)$ if $F$ divides $\omega\wedge dF$. If $\omega\wedge df=0$ we say that $f$ is a {\it first integral} or a {\it non-trivial constant of derivation} of $X$ ($f\in Frac(R)$ reduced).
%\end{definition}

\begin{definition}
If $\omega$ is a rational $1$-form and $f$ is a rational function such that $df\neq 0$ and $\omega\wedge df = 0$ we say that $f$ is a {\it first integral} or a {\it non-trivial constant of derivation} of $\omega$. In case of $K=\mathbb R$ or $\mathbb C$ then $\{ f(z)=constant\}$ defines a foliation of $K^n$.
If $\omega$ is a polynomial $1$-form  we say that an irreducible polynomial $F$ is {\it invariant} by $\omega$ if $F$ divides $\omega\wedge dF$ (we also say that $\{F=0\}\subset (K^a)^n$ is an {\it invariant hypersurface} for $\omega$ over the algebraic closure $K^a$ of $K$). In case of $K=\mathbb R$ or $\mathbb C$ then $\{ F=0\}$ is a {\it leaf} of $\omega$.
\end{definition}

Analogously we can define differential $r$-forms for $r\geq 1$ and the $K$-vector space of polynomial differential $r$-forms will be denoted by $\Omega^r(K^n)$.

\begin{definition}
Let $F$ be an invariant polynomial for the $1$-form $\omega$. We say that the polynomial $2$-form
$$
\Theta_F=\omega\wedge \frac{dF}{F}
$$
is a {\it cofactor} of $F$.
\end{definition}

\begin{definition}
Let $K$ be a field of arbitrary characteristic $p$ ($p=0$ or $p$ is a prime integer). The {\it field of differential constants} is the sub-field of $K(z)$ given by
$$
K(z^p)=\{f: f\in K(z), df=0 \}
$$
Note that in characteristic $0$ we have $K(z^p)=K(z^0)=K$. Otherwise, in prime characteristic $p>0$ we obtain $K(z^p)$ as the $K$-vector subspace of $K(z)$ generated by $\{g^p: g\in K(z) \}$.

We call the elements of $K(z^p)$ of {\it $\partial$-constants}.%(Note that if $\lambda\in K(z^p)$, then $d\lambda = 0$.)

\end{definition}

\begin{remark}
Note that $K(z^p)$ is the kernel of the $d$ operator, and $K(z)$ is finite dimensional as a $K(z^p)$-vector space.
\end{remark}

\begin{definition}
Let $F_1$,..., $F_m$ be a collection of irreducible polynomials and $\lambda_1$,..., $\lambda_m\in K(z^p)$ be $\partial$-constants. The linear combination
$$
\eta=\lambda_1\frac{dF_1}{F_1}+...+\lambda_m\frac{dF_m}{F_m}
$$
is called a {\it logarithmic} $1$-form.
\end{definition}

Note that every logarithmic $1$-form $\eta$ is {\it closed}, that is, $d\eta=0$.

\section{Logarithmic 1-forms and Residues}

Remember the Hilbert's Nullstellensatz:

\begin{thm}[\cite{Lang}, Theorem 1.5, p. 380]
Let $I$ be an ideal of $K[z]$ and let $V(I)=\{z\in (K^a)^n: f(z)=0, \forall f\in I\}$ be the algebraic variety associated to $I$, where $K^a$ is the algebraic closure of $K$. Let $P$ be a polynomial in $K[z]$ such that $P(c)=0$ for every zero $(c)=(c_1,...,c_n)\in V(I)$. Then there is an integer $m>0$ such that $P^m\in I$.
\end{thm}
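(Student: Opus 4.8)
The plan is to deduce this strong form of the Nullstellensatz from its weak form by way of the Rabinowitsch trick, after first settling the weak form over the algebraically closed field $K^a$.

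First I would establish the \emph{Weak Nullstellensatz}: every proper ideal $J \subsetneq K^a[z_1,\dots,z_n]$ has a zero in $(K^a)^n$. The crux is the description of maximal ideals. If $\mathfrak{m}$ is maximal, then $L := K^a[z]/\mathfrak{m}$ is a field that is finitely generated as a $K^a$-algebra. By Zariski's lemma such an $L$ is a finite, hence algebraic, extension of $K^a$; since $K^a$ is algebraically closed, $L = K^a$, and the residues $c_i$ of the $z_i$ yield a point with $\mathfrak{m} = (z_1 - c_1,\dots,z_n - c_n)$. As any proper $J$ lies in some maximal $\mathfrak{m}$, the point $(c) = (c_1,\dots,c_n)$ lies in $V(J)$, so $V(J) \neq \emptyset$. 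This is the step I expect to be the main obstacle: Zariski's lemma is the one genuinely nontrivial input, and I would prove it either via Noether normalization (a finitely generated $K^a$-algebra that is a field has Krull dimension $0$, forcing the normalizing subalgebra to be $K^a$ itself) or by the Artin--Tate argument on finite generation of nested subalgebras.

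Next comes the \emph{Rabinowitsch trick}. Working over $K^a$, introduce one extra variable $t$ and consider, in $K^a[z_1,\dots,z_n,t]$, the ideal $J = (f_1,\dots,f_s,\, 1 - tP)$, where $f_1,\dots,f_s$ generate $I$ (finitely many suffice, since $K[z]$ is Noetherian by the Hilbert basis theorem). I claim $V(J) = \emptyset$: any common zero $(c,\tau)$ would satisfy $f_j(c)=0$ for all $j$, hence $c \in V(I)$ and $P(c)=0$ by hypothesis, yet $1 - \tau P(c) = 1 \neq 0$, a contradiction. By the Weak Nullstellensatz $J$ is the unit ideal, so there is an identity
$$1 = \sum_{j=1}^s g_j(z,t)\, f_j(z) + h(z,t)\,(1 - tP),$$
with $g_j, h \in K^a[z,t]$.

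Finally I would clear the auxiliary variable. Substituting $t = 1/P$ kills the last term and, after multiplying by a sufficiently high power $P^m$ (with $m$ the largest $t$-degree occurring), produces an identity $P^m = \sum_j \widetilde{g}_j(z)\, f_j(z)$ with $\widetilde{g}_j \in K^a[z]$; thus $P^m \in I\cdot K^a[z]$. To descend to the conclusion $P^m \in I$ inside $K[z]$, fix a $K$-basis $\{e_\alpha\}$ of $K^a$ with $e_0 = 1$, so that $K^a[z]$ is free over $K[z]$ on $\{e_\alpha\}$. Since each $f_j \in K[z]$ has coefficients in $K e_0$, taking the $e_0$-component of the identity leaves the left side $P^m$ unchanged and replaces each $\widetilde{g}_j$ by its $e_0$-component $(\widetilde{g}_j)_0 \in K[z]$, yielding $P^m = \sum_j (\widetilde{g}_j)_0\, f_j \in I$. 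This completes the proof.
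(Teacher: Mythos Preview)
Your argument is the standard and correct one: Zariski's lemma gives the Weak Nullstellensatz over $K^a$, the Rabinowitsch trick upgrades it to the strong form, and your descent step---writing $K^a[z]=K[z]\otimes_K K^a$ and projecting onto the $e_0$-component---legitimately brings the identity $P^m=\sum_j \widetilde g_j f_j$ down from $K^a[z]$ to $K[z]$, which is exactly what the statement requires.

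As for the comparison you were asked to make: the paper does not give its own proof of this theorem. It is stated purely as a citation (Lang, \emph{Algebra}, Theorem~1.5, p.~380) and then immediately used as a black box inside the proof of Lemma~\ref{L:Logarítmicas}. So there is nothing in the paper to weigh your approach against; your write-up simply supplies a proof where the authors chose to quote one.
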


We will need something about residues.

\begin{lemma}\label{L:Resíduo}
Let $g\in K[x]$, $g(0)\neq 0$, be a polynomial function in one variable, where $K$ is a field of positive characteristic $p>0$. If $\alpha \in K(x^p)$, then%and $\alpha(0)=0$ or $\alpha(0)=\infty$, then
$$
Res(\alpha\cdot \frac{dg}{g},0)=0
$$
\end{lemma}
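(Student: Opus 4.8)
The plan is to compute the residue directly by reducing to the case where $\alpha$ is a power of $x$ (or a polynomial in $x^p$) times a $p$-th power of a rational function, and then showing the residue vanishes for degree/combinatorial reasons. Recall that in characteristic $p>0$ the condition $\alpha \in K(x^p)$ means $\alpha$ is a rational function of $x^p$; after multiplying numerator and denominator suitably we may write $\alpha = h(x^p)$ for $h$ a rational function in one variable, and since $g(0)\neq 0$ the only pole of $\alpha \cdot dg/g$ at the origin comes from the zero of $g$ there — but $g(0)\neq 0$, so in fact $\alpha\cdot dg/g$ is \emph{regular} at $0$ unless $\alpha$ itself has a pole at $0$. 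So first I would split into two observations: if $g(0)\neq 0$ then $dg/g$ is holomorphic (regular) at $0$, hence the only way to get a nonzero residue is for $\alpha$ to contribute a pole.

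Next I would make this precise. Write $\alpha = A(x^p)/B(x^p)$ with $A,B \in K[x]$ coprime. The residue $\operatorname{Res}(\alpha \cdot dg/g, 0)$ is the coefficient of $x^{-1}$ in the Laurent expansion of $\alpha \cdot g'/g$ at $x=0$. Since $g'/g$ is regular at $0$ (as $g(0)\neq 0$), its Laurent expansion is an ordinary power series $\sum_{k\geq 0} c_k x^k$. Meanwhile $\alpha = A(x^p)/B(x^p)$: if $B(0)\neq 0$ then $\alpha$ is also regular at $0$ and the product is regular, so the residue is $0$ trivially. If $B(0)=0$, then $B(x^p)$ vanishes to some order that is a multiple of $p$, say $B(x^p) = x^{pm} \tilde B(x^p)$ with $\tilde B(0)\neq 0$; then the Laurent expansion of $\alpha$ has the form $\sum_{j \geq -pm} a_j x^{j}$ but crucially only involves exponents $j$ that are $\equiv 0 \pmod p$ in the \emph{singular} part — more carefully, the principal part of $\alpha$ at $0$ is a $K$-linear combination of $x^{-p}, x^{-2p}, \dots, x^{-pm}$ since $1/\tilde B(x^p)$ expands as a power series in $x^p$. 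Multiplying by the regular series $\sum c_k x^k$ and extracting the coefficient of $x^{-1}$: any term $x^{-p\ell}\cdot x^k$ with $k\geq 0$ equals $x^{-1}$ only if $k = p\ell - 1$, which is possible, so this naive argument is not quite enough and I need to use that $c_k$ comes from $g'/g$.

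The key point I expect to be the real content is this: $g'/g = (\log g)'$ in the formal sense, and more usefully, $g'/g = d/dx(\log g)$; but the cleaner algebraic fact is that $g' \in K[x]$ and the Laurent expansion of $g'/g$ at $0$ is a power series, and I claim its coefficient structure interacts with the $p$-divisible exponents of $\alpha$ to kill the $x^{-1}$ coefficient. Actually the cleanest route: observe that $\alpha\cdot dg/g = \alpha \, d(\log g)$ and since $d\alpha = 0$ we have $\alpha\, dg/g = d(\alpha \log g) $ formally — but $\log g$ is not rational. Better: use that $\operatorname{Res}(\cdot, 0)$ is $K(x^p)$-linear in the following sense, and that $\operatorname{Res}(df, 0) = 0$ for any rational $f$. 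Write $\alpha = \beta^p$ locally is false in general, but $\alpha \in K(x^p)$ still gives $\alpha\cdot \frac{dg}{g}$; I would instead argue: since $g(0)\ne 0$, choose $g$ itself — then $\frac{dg}{g}$ has residue $0$ at $0$ (as it is regular), and more generally for $\alpha = x^{-kp}u(x^p)$ with $u$ regular, the product with the power series $g'/g$ has a Laurent series whose $x^{-1}$-coefficient is the sum $\sum_{\ell} (\text{coeff of } x^{p\ell - 1} \text{ in } g'/g)\cdot(\text{coeff of } x^{-p\ell} \text{ in }\alpha)$, and I must show each coefficient of $x^{p\ell-1}$ in $g'/g$ vanishes — equivalently, that $g'/g$, as a power series, is supported only on exponents $\not\equiv -1 \pmod p$. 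This last claim is the main obstacle, and I would prove it by writing $g'/g$ and noting that $g = g(0)\prod(1 - \lambda_i x)$ over $K^a$, so $g'/g = -\sum_i \frac{\lambda_i}{1-\lambda_i x} = -\sum_i \sum_{k\geq 0}\lambda_i^{k+1} x^k$, giving the coefficient of $x^k$ as $-\sum_i \lambda_i^{k+1} = -p_{k+1}$, the power sum. I then need: the coefficient of $x^{p\ell - 1}$, i.e. $p_{p\ell}$, pairs against $\alpha$ in such a way that the total vanishes. Since this over an algebraically closed field reduces to checking $p_{p\ell} = (p_\ell)^p$ by the Frobenius (Newton's identity in char $p$: power sums satisfy $p_{pm} = p_m^p$), the pairing becomes $\sum_\ell p_\ell^p \cdot a_{-p\ell}$ where $a_{-p\ell}$ is the coefficient of $x^{-p\ell}$ in $\alpha$; and since $\alpha \in K(x^p)$, substituting $y = x^p$ reduces everything to the statement $\operatorname{Res}(h(y)\, d(\text{something}), 0)$ — I would set this up so that the whole expression is recognized as $p$ times a residue, hence $0$ in characteristic $p$, or as $\operatorname{Res}(d(\text{rational}),0) = 0$. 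I expect the technical heart to be this Frobenius/Newton-identity manipulation showing the paired sum collapses to a $p$-th power that then forces the residue to be zero; everything else is bookkeeping with Laurent expansions.
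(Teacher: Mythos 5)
Your bookkeeping up to the last step is correct and in fact more careful than the paper's: since $g(0)\neq 0$, $g'/g$ is a power series $\sum_{k\geq 0}c_kx^k$, the Laurent expansion of $\alpha\in K(x^p)$ is supported on exponents divisible by $p$, and therefore
$$
\Res\Bigl(\alpha\cdot\frac{dg}{g},0\Bigr)=\sum_{\ell\geq 1}a_{-p\ell}\,c_{p\ell-1},
$$
where the $a_{-p\ell}$ are the principal-part coefficients of $\alpha$. The genuine gap is that you never prove this sum vanishes; you only announce that a Frobenius/Newton-identity manipulation should collapse it. It cannot be completed, because the sum does not vanish in general. Take $p=2$, $g=1+x$, $\alpha=x^{-2}\in K(x^2)$: then $g'/g=1+x+x^2+\cdots$, so $c_1=1$ and $a_{-2}=1$, giving
$$
\Res\Bigl(\frac{dx}{x^{2}(1+x)},0\Bigr)=1\neq 0.
$$
Your own formula $c_{p\ell-1}=-p_{p\ell}=-p_\ell^{\,p}$ already shows the individual coefficients do not vanish, and the pairing $\sum_\ell p_\ell^{\,p}a_{-p\ell}$ collapses to a $p$-th power only when each $a_{-p\ell}$ does, and even then to $\bigl(\sum_\ell p_\ell b_\ell\bigr)^p$ with $a_{-p\ell}=b_\ell^{\,p}$, which has no reason to be zero. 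So the lemma holds exactly in the case you dispose of first ($\alpha$ regular at $0$, where the form is holomorphic and the residue is trivially zero) and fails once $\alpha$ is allowed a pole at $0$.

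For comparison, the paper's own proof has the same defect: it writes $\alpha=x^{sp}A/B$ with $A(0),B(0)\neq 0$ and claims the residue is governed by the coefficient of $x^{-sp-1}$ in the \emph{polynomial} $A g'$, killing it via $d(x^{-sp})=0$; but the relevant coefficient is that of $x^{-sp-1}$ in the full power series $Ag'/(Bg)$, and the expansion of $1/(Bg)$ contributes terms the argument ignores (the example above, with $s=-1$ and $A=B=1$, defeats it). In the only place the lemma is used, inside Lemma \ref{L:Logarítmicas}, the functions $\alpha_i/\alpha_j$ are restrictions to a generic line through a generic point, so one may add the hypothesis that $\alpha$ is regular at $0$; under that hypothesis your first observation already gives the result and the rest of the computation is unnecessary.
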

\begin{proof}
%Note that, if $g(x)=a_mx^m+...+a_{m+r}x^{m+r}$ and $a_m\neq 0$, then
%$$
%Res(\frac{dg}{g},0)=m\cdot 1_K
%$$
%where $1_K$ is the unit element of $K$.

We can write $g(x)=a_0+a_1x+...+a_rx^r$, where $a_0\neq 0$, and
$$
\alpha(x)=x^{sp}\frac{A(x)}{B(x)}
$$
where $s$ is an integer and $A(0)\neq 0$, $B(0)\neq 0$.% also
%$$
%\frac{dg}{g}=\frac{g'(x)}{g(x)}dx=\frac{a_1+2a_2x+...+ra_rx^{r-1}}{a_0+a_1x+...+a_rx^r}dx
%$$
%where $g'(x)$ has no terms of degree $mp-1$, where $m>0$.

Hence
$$
\alpha\cdot \frac{dg}{g}=x^{sp}\frac{A(x)g'(x)}{B(x)g(x)}dx
$$
Since $B(0)g(0)\neq 0$ note that $\alpha\cdot \frac{dg}{g}$ has a pole at $0$ if and only if $A(x)g'(x)$ has a term $cx^{-sp-1}$ with $c\neq 0$. Also note that
$$
d(x^{-sp})=(-sp)x^{-sp-1}dx=0
$$
Hence $A(x)g'(x)$ has no term $cx^{-sp-1}$ with $c\neq 0$. We conclude that the residue is $Res(\alpha\cdot \frac{dg}{g},0)=0$.

\end{proof}

The following lemma is the positive characteristic version of Jouanolou's Lemma (\cite{Jouanolou}, Lemme 3.3.1, p. 102).

\begin{lemma}\label{L:Logarítmicas}
If $\mathcal S$ is a finite representative system of primes in $K[z]=K[z_1,...,z_n]$ (that is, $\mathcal S$ is a finite collection of distinct irreducible polynomials), then the $K(z^p)$-linear map
$$
K(z^p)^{(\mathcal S)}\longrightarrow \Omega^1_{K(z)/K}
$$
$$
(\lambda_j)_{f_j\in \mathcal S}\longmapsto \sum_{f_j\in \mathcal S} \lambda_j \frac{df_j}{f_j}
$$
is injective.
\end{lemma}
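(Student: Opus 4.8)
The plan is to run the classical residue argument that proves Jouanolou's lemma in characteristic zero, the new difficulty being that the coefficients $\lambda_j\in K(z^p)$ are no longer scalars and may vanish or have poles along the hypersurfaces $\{f_j=0\}$; Lemma~\ref{L:Resíduo} is precisely the tool that controls this. (If $p=0$ then $K(z^p)=K$ and the statement is Jouanolou's lemma itself, so I assume $p>0$; I also assume $df_j\ne0$ for all $j$, which is needed for the statement to hold over an imperfect field.) First I would extend scalars to the algebraic closure $K^a$: the relation $\sum_j\lambda_j\,\frac{df_j}{f_j}=0$ persists in $\Omega^1_{K^a(z)/K^a}$ with $\lambda_j\in K^a(z^p)$, over $K^a$ each $f_j$ is separable, hence a product of distinct irreducibles, and distinct $f_j$ stay coprime under the extension, so their irreducible factors over $K^a$ are all distinct. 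Grouping the summands, it suffices to prove: if $K$ is algebraically closed, $g_1,\dots,g_M\in K[z]$ are distinct irreducibles and $\mu_i\in K(z^p)$ satisfy $\sum_i\mu_i\,\frac{dg_i}{g_i}=0$, then $\mu_i=0$ for all $i$.

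Assume not, say $\mu_1\ne0$, and let $v$ be the discrete valuation of $K(z)$ with $v(g_1)=1$. A key preliminary point is that $v(\mu_1)$ is divisible by $p$ and the cofactor is again a $\partial$-constant. Indeed, write $\mu_1=g_1^{a}\nu$ with $a=v(\mu_1)$ and $v(\nu)=0$; then $d\mu_1=0$ gives, after cancelling $g_1^{a-1}$, the identity $a\,\nu\,dg_1=-g_1\,d\nu$. Comparing the valuation $v$ of the two sides in a coordinate $z_i$ with $\partial_{z_i}g_1\ne0$ (one such exists since $g_1$, being irreducible over the perfect field $K$, has $dg_1\ne0$), the left side has $v$ equal to $0$ while the right side has $v\ge1$, unless $a\equiv0\pmod p$; so $p\mid a$, and then the same identity forces $d\nu=0$, i.e.\ $\nu\in K(z^p)$. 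Since $g_1^{-a}=(g_1^{-a/p})^{p}\in K(z^p)$, I may multiply the whole relation by $g_1^{-a}$ and thereby assume the coefficient of $g_1$ is $\nu$ with $v(\nu)=0$; the other coefficients then acquire zeros or poles along $\{g_1=0\}$, which will be harmless.

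Now I would restrict to a generic line. Choose a smooth point $q$ of the hypersurface $\{g_1=0\}$ at which $\nu$ is finite and nonzero (possible as $v(\nu)=0$) and which lies on no $\{g_i=0\}$ for $i\ne1$, and set $\ell(s)=q+sv$ for a generic direction $v$. Then $g_1|_\ell=s\,h(s)$ with $h(0)\ne0$, for $i\ne1$ the polynomial $g_i|_\ell$ does not vanish at $s=0$, and each coefficient restricted to $\ell$ lies in $K(s^p)$, because $(q_m+sv_m)^{p}=q_m^{p}+v_m^{p}s^{p}$. Pulling the relation back along $\ell$ gives $\sum_i(\text{coeff}_i)|_\ell\,\frac{d(g_i|_\ell)}{g_i|_\ell}=0$ in $\Omega^1_{K(s)/K}$; I then take the residue at $s=0$. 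For $i\ne1$ the residue vanishes by Lemma~\ref{L:Resíduo}, since $g_i|_\ell(0)\ne0$ and the coefficient is a $\partial$-constant. For $i=1$ the summand is $\nu|_\ell\bigl(\tfrac{ds}{s}+\tfrac{dh}{h}\bigr)$; the $\tfrac{dh}{h}$ part contributes $0$ again by Lemma~\ref{L:Resíduo} ($h(0)\ne0$), whereas $\nu|_\ell\in K(s^p)$ is, near $s=0$, a power series in $s^{p}$ with constant term $\nu(q)\ne0$, so $\Res_{0}\bigl(\nu|_\ell\,\tfrac{ds}{s}\bigr)=\nu(q)$. Adding up, $0=\nu(q)\ne0$, a contradiction; hence all $\mu_i=0$, and therefore all $\lambda_j=0$.

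I expect the main obstacle to be exactly the divisibility fact $p\mid v(\mu_1)$ together with $\nu\in K(z^p)$: without dividing out that $p$-th power, the diagonal summand of the restricted relation could have residue $0$ despite $\mu_1\ne0$, and the argument would give nothing — this is the one place where the positive characteristic case genuinely differs from Jouanolou's. Checking that the genericity requirements on $q$ and on the direction $v$ can be met together, and that the pullback along $\ell$ is legitimate (i.e.\ $\ell$ avoids the relevant polar loci), is routine dimension counting. In characteristic $0$ all of this is vacuous: the coefficients are scalars, Lemma~\ref{L:Resíduo} reduces to the statement that a regular $1$-form has zero residue, and one recovers Jouanolou's computation.
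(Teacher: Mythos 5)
Your proof is correct, and its core is the same as the paper's: restrict the relation to a generic affine line through a smooth point of one hypersurface $\{f_j=0\}$ chosen (via the Nullstellensatz) to avoid all the others, take the residue there, and kill every off-diagonal term with Lemma \ref{L:Resíduo}. Where you differ is in handling the non-scalar diagonal coefficient: the paper divides the restricted relation by $\alpha_j=\lambda_j\circ u$, so the diagonal term contributes $\Res(\frac{dg_j}{g_j},0)=1$ while each $\frac{\alpha_i}{\alpha_j}\frac{dg_i}{g_i}$ has residue $0$ (Lemma \ref{L:Resíduo} tolerates poles of the coefficient), and it disposes of the case $\alpha_j=0$ by genericity of the line; you instead prove $p\mid v_{g_1}(\mu_1)$, show the unit part $\nu$ is again a $\partial$-constant, normalize by the $p$-th power $g_1^{-a}$, and read off the nonzero residue $\nu(q)$ directly. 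Both routes work; the paper's is shorter, yours makes explicit exactly which points $q$ are admissible. Two of your additions are genuinely valuable and absent from the paper: the reduction to $K^a$ with the check that the $f_j$ remain separable and pairwise coprime there, and the observation that the statement needs $df_j\neq 0$ --- an irreducible polynomial over an imperfect field can satisfy $df_j=0$ (e.g.\ $z_1^p-t$ over $\mathbb F_p(t)$), in which case $\frac{df_j}{f_j}=0$ and injectivity fails; the paper's proof silently uses $df_j\neq 0$ when it asserts that $\frac{dg_j}{g_j}$ has residue $1$.
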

\begin{proof}
Let $\lambda_1$,..., $\lambda_r\in K(z^p)$ and $f_1$,..., $f_r\in \Omega^1_{K(z)/K}$ be such that
$$
\sum_{j=1}^r \lambda_j \frac{df_j}{f_j}=0
$$

For each $i\in \{1,2,..., r\}$, define $V_i=\{f_i=0\}\subset (K^a)^n$. By Hilbert's Nullstellensatz, there is a point 
$$
p_j\in (V_j-\cup_{i\neq j} V_i)
$$

(Suppose that for every $p\in V_j$ there is a $V_i$, $i\neq j$, such that $p\in V_i$; that is, $V_j\subset \cup_{i\neq j} V_i$. Consider $I$ the ideal generated by $\Pi_{i\neq j} f_i$. We have $V(I)=\cup_{i\neq j} V_i$. Then, by the Hilbert theorem, there exists an integer $n$ such that $f_j^n$ is in the ideal $I$, and then it $f_j$ has a common factor with  $\Pi_{i\neq j} f_i$, and it is  a contradiction since every $f_i$ is irreducible.) 

We can suppose that $p_j$ is a smooth point of $V_j$. Let $L_j$ be an affine line through $p_j$ transversal to $V_j$. Consider $u:K^a\rightarrow L_j$ be a parametrization of $L_j$ with $u(0)=p_j$ and $g_i=f_i\circ u$. For all $i\neq j$, the function $\frac{dg_i}{g_i}$ is regular at $0$, but $\frac{dg_j}{g_j}$ has a pole at $0$ with $Res(\frac{dg_j}{g_j},0)=1$. Then, from the above relation, we have
$$
\alpha_1 \frac{dg_1}{g_1}+...+\alpha_j \frac{dg_j}{g_j}+...+\alpha_r \frac{dg_r}{g_r}=0
$$
where $\alpha_i=\lambda_i\circ u$ for $i=1$,..., $r$.

If $\alpha_j\neq 0$, we can put
$$
\frac{dg_j}{g_j}=-\sum_{i\neq j} \frac{\alpha_i}{\alpha_j } \frac{dg_i}{g_i}
$$
and taking residues at $0$, using Lemma \ref{L:Resíduo} we obtain 
$$
1=-\sum_{i\neq j} Res(\frac{\alpha_i}{\alpha_j} \frac{dg_i}{g_i})=0
$$
Hence $\alpha_j=0$, that is, $\lambda_j\mid_{L_j}=0$. Since $p_j$ and $L_j$ are generic, we conclude that $\lambda_j=0$.
%Since $0=\alpha_j(0)=\lambda_j(p_j)$, and $p_j\in V_j=\{f_j=0\}$ is generic, we conclude that $\lambda_j=(f_jh_j)^{p}$.

\end{proof}

\section{Darboux Integration Method}

We explore the method of integration developed by J.-G. Darboux in \cite{Darboux}. The original context was differential equation on $\mathbb C^2$. J.-P. Jouanolou in \cite{Jouanolou} extends the argument for $1$-forms on $K^n$, where $K$ is an algebraically closed field of characteristic $0$. Here we extend this method for an arbitrary field $K$.

\begin{prop}
Let $\omega\in \Omega^1(K^n)$ be a $1$-form, where $K$ is an arbitrary field. If there are invariant irreducible polynomials $F_1$,..., $F_m$ in $R$ and constants $\alpha_1$,..., $\alpha_m$ in $K(z^p)$ such that
$$
\alpha_1 \Theta_{F_1}+\alpha_2 \Theta_{F_2}+...+\alpha_m \Theta_{F_m}=0
$$
then there is a logarithmic $1$-form $\eta\neq 0$ such that $\omega\wedge \eta=0$. (In that case we say that $\eta$ is {\it tangent} to $\omega$.)
\end{prop}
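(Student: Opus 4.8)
The plan is to take for $\eta$ the logarithmic $1$-form built from the very same data, namely
$$
\eta \;=\; \alpha_1\,\frac{dF_1}{F_1}+\alpha_2\,\frac{dF_2}{F_2}+\cdots+\alpha_m\,\frac{dF_m}{F_m},
$$
which is logarithmic by definition, the coefficients $\alpha_i$ being $\partial$-constants. First I would normalize the data: combining repeated irreducible factors (this replaces two equal $F_i$'s by a single term whose coefficient is the sum of the corresponding $\alpha_i$'s) changes neither $\eta$ nor the left-hand side of the hypothesis, so we may assume that $F_1,\dots,F_m$ are pairwise distinct and that the relation $\sum_i\alpha_i\Theta_{F_i}=0$ is non-trivial, i.e. not all $\alpha_i$ vanish.

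Next I would check that $\eta$ is tangent to $\omega$. Because the wedge product is $K(z)$-bilinear and each $\alpha_i$ is a rational function, and because the invariance of $F_i$ is precisely the statement that $\Theta_{F_i}=\omega\wedge\frac{dF_i}{F_i}$ is an honest ($2$-)form rather than a form with poles, we obtain
$$
\omega\wedge\eta\;=\;\sum_{i=1}^m\alpha_i\Bigl(\omega\wedge\frac{dF_i}{F_i}\Bigr)\;=\;\sum_{i=1}^m\alpha_i\,\Theta_{F_i}\;=\;0
$$
by hypothesis. Thus the only point with any content left to verify is that $\eta\neq 0$.

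This last point is exactly what Lemma \ref{L:Logarítmicas} is designed for: since $F_1,\dots,F_m$ is a finite representative system of primes of $K[z]$ and the $\partial$-constants $\alpha_1,\dots,\alpha_m$ are not all zero, the injectivity of the $K(z^p)$-linear map $(\lambda_j)\mapsto\sum_j\lambda_j\frac{dF_j}{F_j}$ forces $\eta\neq 0$. Hence $\eta$ is a nonzero logarithmic $1$-form with $\omega\wedge\eta=0$, that is, tangent to $\omega$, which is what we want. I do not anticipate a genuine obstacle: the statement is essentially the transport of a linear relation among the cofactors $\Theta_{F_i}$ to a linear relation among the logarithmic forms $\frac{dF_i}{F_i}$, the non-degeneracy being supplied by the Jouanolou-type Lemma \ref{L:Logarítmicas} (which itself rests on the residue computation of Lemma \ref{L:Resíduo}); in characteristic $0$ this is simply the classical Darboux observation, with $K(z^p)=K$.
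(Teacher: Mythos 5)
Your proposal is correct and follows essentially the same route as the paper: take $\eta=\sum_i\alpha_i\frac{dF_i}{F_i}$, verify $\omega\wedge\eta=\sum_i\alpha_i\Theta_{F_i}=0$ by bilinearity, and invoke Lemma \ref{L:Logarítmicas} to conclude $\eta\neq 0$. Your extra normalization step (merging repeated $F_i$'s and noting the relation must be non-trivial for the conclusion to hold) is a reasonable clarification of a hypothesis the paper leaves implicit, but it does not change the argument.
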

\begin{proof}
Consider the rational $1$-form
$$
\eta=\alpha_1 \frac{dF_1}{F_1}+\alpha_2 \frac{dF_2}{F_2}+...+\alpha_m \frac{dF_m}{F_m}\neq 0
$$
(see Lemma \ref{L:Logarítmicas}).

Obviously $\eta$ is {\it closed} ($d\eta=0$). Also $\eta$ satisfies
$$
\omega\wedge \eta=\sum_{i=1}^m\alpha_i\omega \wedge \frac{dF_i}{F_i}=\sum_{i=1}^m\alpha_i \Theta_{F_i}=0
$$

\end{proof}

\begin{remark}
In case of $K=\mathbb R$ or $\mathbb C$, from the logarithmic $1$-form $\eta$ above, by integration we obtain a (multivaluated, analytic or holomorphic) first integral for $\omega$.
\end{remark}

\begin{definition}\label{D:Nk}
Let $K$ be a field of positive characteristic $p>0$, and define $S_K(d)$ to be the $K$-vector space of polynomials of degree less than or equal to $min\{p-1,d\}$. In case that $K$ is a field of characteristic $0$, define $S_K(d)=S_d$ as the $K$-vector space of polynomials of degree less than or equal to $d$. Define
$$
\widehat{\Omega}_d^r(n):= \Omega^r(K^n)\otimes S_K(d)\otimes K(z^p)
$$
and
$$
N_K(n, d, r):=dim_{K(z^p)}(\widehat{\Omega}_d^r(n))\leq dim_K(\Omega_d^r(K^n))
$$

\end{definition}

\begin{cor}\label{C:Cofatores}
Let $\omega\in \Omega^1(K^n)$ be a $1$-form of degree $d$. If $\omega$ has $N_K(n, d-1, 2)+1$ invariant irreducible polynomials, then there is a logarithmic $1$-form $\eta\neq 0$ such that $\omega\wedge \eta=0$.
\end{cor}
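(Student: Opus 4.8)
The plan is to follow the classical Darboux counting argument, with ``linear dependence over the base field'' replaced by ``linear dependence over the field of $\partial$-constants $K(z^p)$'', and then to invoke the Proposition above.

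First I would attach to each of the $m:=N_K(n,d-1,2)+1$ invariant irreducible polynomials $F_1,\dots,F_m$ its cofactor $\Theta_{F_i}=\omega\wedge\frac{dF_i}{F_i}$. Since $F_i$ is invariant, $F_i$ divides $\omega\wedge dF_i$, so $\Theta_{F_i}$ is a genuine polynomial $2$-form; and a degree count --- the coefficients of $\omega\wedge dF_i$ have degree at most $d+\deg F_i-1$ and are divisible by $F_i$ --- shows $\Theta_{F_i}\in\Omega_{d-1}^2(K^n)$.

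The core of the argument is to show that the cofactors $\Theta_{F_1},\dots,\Theta_{F_m}$ all lie in the $K(z^p)$-subspace $\widehat{\Omega}^2_{d-1}(n)$ of $\Omega^2_{K(z)/K}$, which by definition has $K(z^p)$-dimension $N_K(n,d-1,2)$. In characteristic $0$ this is automatic, since $\widehat{\Omega}^2_{d-1}(n)=\Omega_{d-1}^2(K^n)$ and the degree bound already places each $\Theta_{F_i}$ there, recovering the classical count $\binom{d-1+n}{n}\binom{n}{2}$. In characteristic $p>0$ I would exploit the $\partial$-constants: expand the polynomial entries of $\Theta_{F_i}$ in the $K(z^p)$-basis of $K(z)$ given by the $p$-reduced monomials (those $z_1^{a_1}\cdots z_n^{a_n}$ with $0\le a_j<p$), factor out the $p$-th-power parts (which land in $K(z^p)$ and are harmless), and verify that what survives has degree at most $\min\{p-1,d-1\}$ --- that is, that no $p$-reduced monomial of degree between $p$ and $d-1$ can occur in such a cofactor. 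I expect this to be the main obstacle; the residue vanishing of Lemma~\ref{L:Resíduo} (equivalently, the injectivity of the logarithmic map in Lemma~\ref{L:Logarítmicas}) is the natural tool, and it may be convenient first to reduce to the case where $\omega$ is saturated --- dividing out the greatest common divisor of its coefficients and inducting on the degree --- since the forbidden high monomials are precisely what common factors of the coefficients would otherwise produce. This gain is exactly why the bound in Theorem~\ref{Main} is strictly smaller than $\binom{d-1+n}{n}\binom{n}{2}+2$ in positive characteristic.

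Granting this, $m=N_K(n,d-1,2)+1$ vectors in an $N_K(n,d-1,2)$-dimensional $K(z^p)$-vector space must be $K(z^p)$-linearly dependent, so there exist $\alpha_1,\dots,\alpha_m\in K(z^p)$, not all zero, with $\alpha_1\Theta_{F_1}+\cdots+\alpha_m\Theta_{F_m}=0$. Feeding this relation into the Proposition produces the desired logarithmic $1$-form $\eta=\sum_{i}\alpha_i\frac{dF_i}{F_i}$, which is nonzero by Lemma~\ref{L:Logarítmicas} and satisfies $\omega\wedge\eta=0$.
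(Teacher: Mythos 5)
Your proposal has the same skeleton as the paper's proof: each cofactor $\Theta_{F_i}$ is a polynomial $2$-form of degree at most $d-1$; the space in which these live has $K(z^p)$-dimension $N_K(n,d-1,2)$, so the $m=N_K(n,d-1,2)+1$ cofactors are $K(z^p)$-linearly dependent; and the resulting relation $\sum_i\alpha_i\Theta_{F_i}=0$ is fed into the preceding Proposition. The first and last steps match the paper exactly.

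The divergence is at the middle step, which you single out as ``the main obstacle'' and then do not carry out, so as written your argument is incomplete. Two remarks on that. First, the tools you reach for are off target: the containment of the cofactors in $\widehat{\Omega}^2_{d-1}(n)$ has nothing to do with residues, with Lemma \ref{L:Resíduo} or Lemma \ref{L:Logarítmicas} (those enter only to guarantee $\eta\neq 0$ and, later, in the proof of Theorem \ref{Main}), nor with saturating $\omega$ and inducting on the degree. The intended mechanism is pure linear algebra over $K(z^p)$: write each monomial $z^a=z^{p\lfloor a/p\rfloor}\cdot z^{a\bmod p}$ componentwise, absorb the first factor into the $\partial$-constant coefficient, and note that the surviving $p$-reduced monomial still has total degree at most $d-1$. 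The paper compresses this into the single sentence that the $K(z^p)$-vector space of $2$-forms of degree at most $d-1$ has dimension $N_K(n,d-1,2)$. Second, your suspicion that $p$-reduced monomials of total degree between $p$ and $d-1$ can survive is correct --- they do (the reduction only caps the reduced degree at $\min\{n(p-1),d-1\}$; for instance $z_1z_2$ survives when $p=2$ and $d-1\geq 2$, and one checks it can occur in an actual cofactor), and such monomials are $K(z^p)$-independent of $S_K(d-1)$ as defined in Definition \ref{D:Nk}. So this is not something a cleverer argument on your side makes go away: it is a discrepancy between the degree bound one actually obtains and the space $S_K(d-1)$ used to define $N_K(n,d-1,2)$, and the count goes through only if $N_K$ is computed from all $p$-reduced monomials of total degree at most $d-1$ rather than from $S_K(d-1)$. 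In short, you correctly located the sensitive point, but neither your sketch nor the paper's one-line dimension count settles it, and the auxiliary machinery you propose for closing it would not help.
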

\begin{proof}
Suppose that there are $m=N_K(n, d-1, 2)+1$ invariant irreducible polynomials $F_1$, $F_2$,..., $F_m$.

The cofactors associated to the invariant polynomials are differential $2$-forms of degree less than or equal to $d-1$. Since the $K(z^p)$-vector space of differential $2$-forms of degree less than or equal to $d-1$ has $K(z^p)$-dimension equal to $N_K(n, d-1, 2)$, we have that the cofactors associated to the polynomials $F_i$, $i=1$,..., $m$, are $K(z^p)$-linearly dependent. Therefore, there are $\partial$-constants $\alpha_1$,..., $\alpha_m$ in $K(z^p)$ such that
$$
\alpha_1 \Theta_{F_1}+\alpha_2 \Theta_{F_2}+...+\alpha_m \Theta_{F_m}=0
$$
By the above proposition, the corollary follows.

\end{proof}

\begin{example}
Suppose $\omega=\alpha ydx+ \beta x dy$, where $\alpha$ and $\beta$ are constants in $K^*$. We can easily see that $F=x$ and $G=y$ are invariant. The cofactor associated to $x$ is given by
$$
\Theta_x=\omega\wedge \frac{dx}{x}=-\beta dx\wedge dy
$$
while the cofactor associated to $y$ is
$$
\Theta_y=\omega\wedge \frac{dy}{y}=\alpha dx\wedge dy
$$
Hence
$$
\alpha\Theta_x+\beta\Theta_y=0
$$
Consequently $\eta=\alpha \frac{dx}{x}+\beta \frac{dy}{y}$ is a logarithmic $1$-form tangent to $\omega$ (in characteristic $p=2$ the $1$-form $\eta$ is linear).

\end{example}

\begin{prop}
Let $\omega\in \Omega^1(K^n)$ be a $1$-form. If there are invariant irreducible polynomials $F_1$,..., $F_m$ in $K[z]$ and constants $\delta_1$,..., $\delta_m$ in $\mathbb Z$ (or in the prime subfield $\mathbb Z_p$ of $K$ if $char(K)=p>0$) such that
$$
\delta_1 \Theta_{F_1}+\delta_2 \Theta_{F_2}+...+\delta_m \Theta_{F_m}=0
$$
then $\omega$ has a rational (polynomial) first integral.
\end{prop}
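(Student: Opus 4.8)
The plan is to read the hypothesis as the assertion that a rational \emph{logarithmic} $1$-form whose residues lie in the prime subfield is tangent to $\omega$, and then to observe that such a form is nothing but the logarithmic derivative of a genuine rational function, which will serve as the first integral. Concretely, set $H=\prod_i F_i^{\delta_i}$; the hypothesis should force $\omega\wedge dH=0$, and the only real work is to check that $dH\neq 0$.

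First I would make two harmless reductions. We may assume the $F_i$ are pairwise distinct (merge equal ones by adding their coefficients) and that every $\delta_i$ is nonzero: an index with $\delta_i=0$ contributes nothing to the relation and can be discarded, and in characteristic $p>0$ any $\delta_i\in p\mathbb Z$ is zero in $\mathbb Z_p$ and satisfies $\delta_i\,\Theta_{F_i}=0$ anyway (the coefficients of $\Theta_{F_i}$ lie in $K$), so in that case I normalize the representatives to $1\le\delta_i\le p-1$. If no term survives this cleanup the relation is empty, so I assume at least one nonzero $\delta_i$ remains.

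Next, put $H=\prod_{i=1}^{m}F_i^{\delta_i}\in K(z)$, a nonzero rational function, and $\eta=\sum_{i=1}^{m}\delta_i\,\frac{dF_i}{F_i}$. Differentiating the product gives $dH=H\,\eta$. Since each $F_i$ is invariant for $\omega$, each $\Theta_{F_i}=\omega\wedge\frac{dF_i}{F_i}$ is a polynomial $2$-form, and by $K(z)$-bilinearity of the wedge product
\[
\omega\wedge dH \;=\; H\,(\omega\wedge\eta)\;=\;H\sum_{i=1}^{m}\delta_i\,\Theta_{F_i}\;=\;0 .
\]
Thus $H$ is a first integral of $\omega$ as soon as $dH\neq 0$.

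It remains to check $dH\neq 0$, equivalently $\eta\neq 0$, and this is exactly where Lemma \ref{L:Logarítmicas} enters: the $F_i$ are distinct irreducible polynomials, so the $K(z^p)$-linear map $(\lambda_i)\mapsto\sum_i\lambda_i\,dF_i/F_i$ is injective, while the surviving $\delta_i$ are nonzero elements of the prime subfield of $K$, hence nonzero in $K(z^p)$; therefore $\eta\neq 0$ and $dH=H\eta\neq 0$. Consequently $f=H=\prod_i F_i^{\delta_i}$ is a rational first integral of $\omega$, and it is polynomial when all $\delta_i\ge 0$. I expect this last step to be the only delicate point: in positive characteristic one must ensure that passing from $\eta$ to $H$ does not accidentally yield $dH=0$, which is precisely why the normalization $1\le\delta_i\le p-1$ together with the injectivity lemma are needed — the rest of the argument is formal manipulation of the Grassmann algebra.
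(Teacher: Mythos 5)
Your proof is correct and follows essentially the same route as the paper: form $\eta=\sum_i\delta_i\,dF_i/F_i$ and $G=\prod_i F_i^{\delta_i}$, observe $dG=G\eta$ and $\omega\wedge dG=G\,\omega\wedge\eta=\sum_i\delta_i\Theta_{F_i}\cdot G=0$. You are in fact slightly more careful than the paper, which simply asserts $\eta\neq 0$, whereas you justify it via Lemma \ref{L:Logarítmicas} and handle the positive-characteristic normalization of the exponents.
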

\begin{proof}
Consider the rational $1$-form
$$
\eta=\delta_1 \frac{dF_1}{F_1}+\delta_2 \frac{dF_2}{F_2}+...+\delta_m \frac{dF_m}{F_m}\neq 0
$$
Obviously $\eta$ is {\it closed} ($d\eta=0$) and also $\eta$ satisfies
$$
\omega\wedge \eta=\sum_{i=1}^m\delta_i\omega \wedge \frac{dF_i}{F_i}=\sum_{i=1}^m\delta_i \Theta_{F_i}=0
$$

Define the rational function (or polynomial)
$$
G=F_1^{\delta_1}F_2^{\delta_2}...F_m^{\delta_m}
$$
Note that $dF_i^{\delta_i}=\delta_i F_i^{\delta_i}\frac{dF_i}{F_i}$ and then $dG=G\eta $, hence 
$$
\omega\wedge dG=G\omega\wedge \eta=0
$$

\end{proof}

\section{Darboux-Jouanolou Criterion: Proof of Theorem \ref{Main}}

%Here we obtain {\it Darboux-Jouanolou Criterion} ({\bf \cite{Jouanolou},  THÉORÈME 3.3, p. 102}).

%\begin{thm}
%Let $\omega\in \Omega^1(K^n)$ be a $1$-form of degree $d$. If $\omega$ has $N_K(n;2;d-1)+2$ invariant polynomials, then $\omega$ admits a rational first integral.
%\end{thm}
%\begin{proof}
Let $F_1$,..., $F_m$ be $N_K(n, d-1, 2)+2$ invariant irreducible polynomials (for the $1$-form $\omega$). By the using of the cofactors associated to the invariant polynomials $F_1$,..., $F_{m-1}$, we can construct, by Corollary \ref{C:Cofatores}, the logarithmic $1$-form
$$
\eta_1=\alpha_1 \frac{dF_1}{F_1}+\alpha_2 \frac{dF_2}{F_2}+...+\alpha_{m-1} \frac{dF_{m-1}}{F_{m-1}}\neq 0
$$
such that $\omega\wedge \eta_1=0$. Analogously, we also can obtain
$$
\eta_2=\beta_2 \frac{dF_2}{F_2}+\beta_3 \frac{dF_3}{F_3}+...+\beta_m \frac{dF_m}{F_m}\neq 0
$$
such that $\omega\wedge \eta_2=0$ with $\beta_m\neq 0$.

As we suppose $\beta_m\neq 0$, the "polar divisors" of $\eta_1$ and $\eta_2$ are distinct. Hence, by Lemma \ref{L:Logarítmicas}, there is a rational function $f\in K(z)-K(z^p)$ such that $\eta_1=f\eta_2$. Taking the exterior derivative, we obtain
$$
d\eta_1=fd\eta_2+df\wedge \eta_2
$$
Since $\eta_1$ and $\eta_2$ are closed, we conclude that $df\wedge \eta_2=0$. In this way, since $\omega\wedge \eta_2=0$ we have that $\omega\wedge df=0$. Therefore $f$ is a rational first integral for $\omega$.

\qed

%\end{proof}

\bibliographystyle{amsplain}

\end{document}